\newtheorem*{rep@theorem}{\rep@title}
\newcommand{\newreptheorem}[2]{%
\newenvironment{rep#1}[1]{%
 \def\rep@title{#2 \ref{##1}}%
 \begin{rep@theorem}}%
 {\end{rep@theorem}}}
\newtheorem{thm}{Theorem}
\newtheorem{lem}[thm]{Lemma}
\newtheorem{cor}[thm]{Corollary}
\newtheorem*{thm*}{Theorem}
\newtheorem*{problem*}{Problem}
\newtheorem*{claim*}{Claim}
\theoremstyle{definition}
\newcommand{\mc}{\mathcal}
\newcommand{\mr}{\mathrm}
\newcommand{\C}{\mathbb{C}}
\newcommand{\N}{\mathbb{N}}
\newcommand{\R}{\mathbb{R}}
\newcommand{\la}{\langle}
\newcommand{\ra}{\rangle}
\renewcommand{\epsilon}{\varepsilon}
\renewcommand{\phi}{\varphi}
\renewcommand{\hat}{\widehat}
\renewcommand{\bar}{\overline}
\newcommand{\Tube}{\mr{Tube}}
\DeclareFontFamily{U}{mathx}{\hyphenchar\font45}
\DeclareFontShape{U}{mathx}{m}{n}{
      <5> <6> <7> <8> <9> <10>
      <10.95> <12> <14.4> <17.28> <20.74> <24.88>
      mathx10
      }{}
\DeclareSymbolFont{mathx}{U}{mathx}{m}{n}
\DeclareMathAccent{\widecheck}{0}{mathx}{"71}
\DeclareMathAccent{\wideparen}{0}{mathx}{"75}
\numberwithin{equation}{section}
\begin{document}
\selectlanguage{english} 

\begin{abstract}
We give a contractive Schur multiplier characterization of locally compact groups coarsely embeddable into Hilbert spaces. Consequently, all locally compact groups whose weak Haagerup constant is 1 embed coarsely into Hilbert spaces, and hence the Baum-Connes assembly map with coefficients is split-injective for such groups.
\end{abstract}


\title[A Schur multiplier characterization of coarse embeddability]{\texorpdfstring{A Schur multiplier characterization\\of coarse embeddability}{A Schur multiplier characterization of coarse embeddability}}

\author{S{\o}ren Knudby}
\address{Department of Mathematical Sciences, University of Copenhagen,
\newline Universitetsparken 5, DK-2100 Copenhagen \O, Denmark}
\email{knudby@math.ku.dk}

\author{Kang Li}
\address{Department of Mathematical Sciences, University of Copenhagen,
\newline Universitetsparken 5, DK-2100 Copenhagen \O, Denmark}
\email{kang.li@math.ku.dk}

\thanks{Both authors are supported by ERC Advanced Grant no.~OAFPG 247321 and the Danish National Research Foundation through the Centre for Symmetry and Deformation (DNRF92).}
\thanks{\textit{Mathematics Subject Classification (2010): }43A22, 43A35, 46L80}
\thanks{\textit{Keywords: }Coarse embedding, Schur multipliers, Baum-Connes conjecture}

\date{\today}
\maketitle
\parindent 0cm
\parskip 4pt


In this note we study coarse embeddability of locally compact groups into Hilbert spaces. An important application of this concept in \cite{MR1728880}, \cite{MR1905840} and \cite{deprez-li-A-UE} is that the Baum-Connes assembly map with coefficients is split-injective for all locally compact groups that embed coarsely into a Hilbert space (see \cite{MR1292018} and \cite{MR1907596} for more information about the Baum-Connes assembly map). Here, we give a contractive Schur multiplier characterization of locally compact groups coarsely embeddable into Hilbert spaces (see also \cite[Theorem~5.3]{MR2945214} for the discrete case), and this characterization can be regarded as an answer to the non-equivariant version of \cite[Question~1.5]{KL-simple}. As a result, any locally compact group with weak Haagerup constant 1 embeds coarsely into a Hilbert space and hence the Baum-Connes assembly map with coefficients is split-injective for all these groups.

Let $G$ be a $\sigma$-compact, locally compact group. A \emph{(left) tube} in $G\times G$ is a subset of $G\times G$ contained in a set of the form
$$
\Tube(K) = \{(x,y)\in G\times G \mid x^{-1}y\in K\}
$$
where $K$ is any compact subset of $G$.
Following \cite[Definition~3.6]{MR1926869}, we say that a map $u$ from $G$ into a Hilbert space $H$ is a \emph{coarse embedding} if $u$ satisfies the following two conditions:

\begin{itemize}
	\item for every compact subset $K$ of $G$ there exists $R>0$ such that
\begin{align*}
(s,t)\in \Tube(K)\implies \|u(s)-u(t)\|\leq R;
\end{align*}
	\item for every $R>0$ there exists a compact subset $K$ of $G$ such that
\begin{align*}
\|u(s)-u(t)\|\leq R \implies (s,t)\in \Tube(K).
\end{align*}

\end{itemize}
We say that a group $G$ \emph{embeds coarsely into a Hilbert space} or \emph{admits a coarse embedding into a Hilbert space} if there exist a Hilbert space $H$ and a coarse embedding $u:G\rightarrow H$. Note that a coarse embedding need not be injective, and we also do not require it to be continuous.

Every second countable, locally compact group $G$ admits a proper left-invariant metric $d$, which is unique up to coarse equivalence (see \cite{MR0348037} and \cite{HP}). So the preceding definition is equivalent to Gromov's notion of coarse embeddability of the metric space $(G,d)$ into Hilbert spaces. We refer to \cite[Section 3]{deprez-li-A-UE} for more on coarse embeddability into Hilbert spaces for locally compact groups.

A kernel $\phi\colon G\times G\to\C$ is a \emph{Schur multiplier} if for every bounded operator $A = (a_{x,y})_{x,y\in G}\in B(\ell^2(G))$, the matrix $(\phi(x,y)a_{x,y})_{x,y\in G}$ again defines a bounded operator, denoted $M_\phi A$, on $\ell^2(G)$. In this case, it follows from the closed graph theorem that $M_\phi$ in fact defines a \emph{bounded} operator $B(\ell^2(G))\to B(\ell^2(G))$, and the \emph{Schur norm} $\|\phi\|_S$ of $\phi$ is defined to be the operator norm of $M_\phi$.

A kernel $\phi\colon G\times G\to\C$ \emph{tends to zero off tubes}, if for any $\epsilon > 0$ there is a tube $T\subseteq G\times G$ such that $|\phi(x,y)| < \epsilon$ whenever $(x,y)\notin T$.
Note that if $\phi\colon G\to\C$ is a function, then $\phi$ vanishes at infinity (written $\phi\in C_0(G)$), if and only if the associated kernel $\hat\phi\colon G\times G\to\C$ defined by $\hat\phi(x,y) = \phi(x^{-1}y)$ tends to zero off tubes.

\begin{thm}\label{thm:Schur-CE}
Let $G$ be a $\sigma$-compact, locally compact group. The following are equivalent.
\begin{enumerate}
	\item $G$ embeds coarsely into a Hilbert space.
	\item There exists a sequence of Schur multipliers $\phi_n\colon G\times G\to\C$ such that
	\begin{itemize}
		\item $\|\phi_n\|_S\leq 1$ for every natural number $n$;
		\item each $\phi_n$ tends to zero off tubes;
		\item $\phi_n\to 1$ uniformly on tubes.
	\end{itemize}
\end{enumerate}
If any of these conditions holds, one can moreover arrange that the coarse embedding is continuous and that each $\phi_n$ is continuous.
\end{thm}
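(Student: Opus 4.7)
The direction $(1) \Rightarrow (2)$ is standard: given a coarse embedding $u\colon G \to H$, I would take $\phi_n(x,y) = \exp(-\tfrac{1}{n}\|u(x)-u(y)\|^2)$. Schoenberg makes each $\phi_n$ positive definite with $\phi_n(x,x)=1$, hence a contractive Schur multiplier; the two coarse embedding axioms give $\phi_n \to 1$ uniformly on tubes and $\phi_n \to 0$ off tubes respectively.

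For $(2) \Rightarrow (1)$, my plan is to manufacture from $(\phi_n)$ a single conditionally negative type kernel $\psi$ on $G \times G$ that is bounded on tubes and proper off tubes, then take its Hilbert space realization. The main obstacle is the first step: trading the Schur multipliers for positive definite kernels with the same asymptotic behavior, since a general contractive Schur multiplier is far from positive definite. The rescuing observation is that a contractive Schur multiplier whose diagonal equals $1$ is automatically positive definite --- in the Grothendieck--Haagerup factorization $\phi_n(x,y) = \langle \xi_n(y), \eta_n(x)\rangle$ with $\|\xi_n(y)\|, \|\eta_n(x)\| \leq 1$, equality in Cauchy--Schwarz forces $\xi_n(x) = \eta_n(x)$ --- and in our setting the diagonal tends to $1$ uniformly (since the diagonal sits in every tube), so
$$\|\xi_n(x) - \eta_n(x)\|^2 \leq 2\bigl(1 - \Re\phi_n(x,x)\bigr) \longrightarrow 0$$
uniformly in $x$. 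Thus $\tilde\phi_n(x,y) := \langle \xi_n(y), \xi_n(x)\rangle$ is positive definite with $\tilde\phi_n(x,x) \leq 1$ and $\sup|\phi_n - \tilde\phi_n| = o(1)$, so $\tilde\phi_n$ still tends to $1$ on tubes and to $0$ off tubes.

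Now I invoke $\sigma$-compactness: fix a compact exhaustion $\tilde K_1 \subseteq \tilde K_2 \subseteq \cdots$ of $G$ (with $e \in \tilde K_1$), and by diagonal selection pick indices $m_n$ and compact sets $L_n \supseteq \tilde K_n \cup L_{n-1}$ with $|1 - \tilde\phi_{m_n}| \leq 2^{-n}$ on $\Tube(\tilde K_n)$ and $|\tilde\phi_{m_n}| \leq 1/2$ off $\Tube(L_n)$. Realize $\tilde\phi_{m_n}(x,y) = \langle u_n(y), u_n(x)\rangle$ with $\|u_n(x)\| \leq 1$ and set
$$\psi(x,y) := \sum_{n=1}^\infty \|u_n(x) - u_n(y)\|^2 = \sum_{n=1}^\infty \bigl(\tilde\phi_{m_n}(x,x) + \tilde\phi_{m_n}(y,y) - 2\Re\tilde\phi_{m_n}(x,y)\bigr),$$
a sum of nonnegative conditionally negative type kernels. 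On $\Tube(\tilde K_{n_0})$ each $n$-th summand is $\leq 2^{1-n}$ for $n \geq n_0$ and $\leq 4$ otherwise, so $\psi$ is finite and bounded on tubes. Once $n$ is large enough that $\tilde\phi_{m_n}(x,x) \geq 3/4$ uniformly, $(x,y) \notin \Tube(L_n)$ forces the $n$-th summand to be $\geq 1/2$; since the $L_n$ are increasing, $\psi(x,y) \leq R$ confines $(x,y)$ to a single compact tube $\Tube(L_{N(R)})$. Thus $\psi$ is bounded on tubes and proper, and its Schoenberg realization $u\colon G \to H$ with $\|u(x) - u(y)\|^2 = \psi(x,y)$ is the desired coarse embedding.

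For the continuity addendum: in $(1) \Rightarrow (2)$, a continuous $u$ makes the Gaussian $\phi_n$ automatically continuous. In $(2) \Rightarrow (1)$, if each $\phi_n$ is continuous then the Grothendieck--Haagerup factorization can be chosen continuously (e.g.\ via the Paulsen $2 \times 2$ block-positive dilation of $\phi_n$), making $\tilde\phi_n$ continuous; $\psi$ is then continuous by uniform convergence on compact tubes, and the Schoenberg realization of a continuous negative type kernel is continuous.
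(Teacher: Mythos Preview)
Your argument for the equivalence is correct and, for the harder direction $(2)\Rightarrow(1)$, takes a genuinely different route from the paper's. The paper replaces each $\phi_n$ by $|\phi_n|^2$ (still a contractive Schur multiplier, now $[0,1]$-valued but \emph{not} positive definite), forms $\psi=\sum_n\alpha_n(1-\phi_n)$, and shows $\|e^{-t\psi}\|_S\le 1$ for all $t>0$ via the Schur-norm estimate $\|e^{-t\alpha_n(1-\phi_n)}\|_S=e^{-t\alpha_n}\|e^{t\alpha_n\phi_n}\|_S\le 1$; since the resulting symmetric kernel need not be conditionally negative definite, the paper then invokes the nontrivial splitting Lemma~\ref{lem:split} to extract the conditionally negative definite part $h(x,y)=\|R(x)-R(y)\|^2$. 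You instead use the Grothendieck--Haagerup factorization $\phi_n(x,y)=\langle\xi_n(y),\eta_n(x)\rangle$ together with the observation that $\phi_n\to 1$ on the diagonal tube forces $\|\xi_n-\eta_n\|\to 0$ uniformly, so the positive definite perturbation $\tilde\phi_n(x,y)=\langle\xi_n(y),\xi_n(x)\rangle$ inherits the tube-asymptotics of $\phi_n$; from these you assemble a genuine conditionally negative definite $\psi$ directly, bypassing Lemma~\ref{lem:split} altogether. Your route is therefore more elementary; the paper's has the side benefit of yielding the intermediate Schur-semigroup characterization (condition~(3) of Theorem~\ref{thm:CE-equivalent}), which is of independent interest.

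One small point on the continuity addendum: you show that continuous input on one side produces continuous output on the other, but the theorem asserts that \emph{either} condition---possibly with discontinuous data---implies both hold \emph{with} continuous data. For this one still needs to upgrade an arbitrary coarse embedding to a continuous one, which the paper imports from \cite[Theorem~3.4]{deprez-li-A-UE}; once that is cited, your (1)$\Rightarrow$(2) handles the rest.
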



It is well-known that the notion of coarse embeddability into Hilbert spaces can be characterized by positive definite kernels (see \cite[Theorem~2.3]{MR1876896} for the discrete case and \cite[Theorem~1.5]{deprez-li-A-UE2} for the locally compact case).

Following \cite{K-WH}, $G$ has the \emph{weak Haagerup property with constant 1}, if there is a sequence of continuous functions $\phi_n\in C_0(G)$ convering uniformly to $1$ on compact subsets of $G$ and such that the associated kernels $\hat\phi_n\colon G\times G\to\C$ are Schur multipliers with $\|\hat\phi_n\|_S\leq 1$.

From Theorem~\ref{thm:Schur-CE} together with \cite[Theorem~3.5]{deprez-li-A-UE} we immediately obtain the following. 
\begin{cor}
If $G$ is a $\sigma$-compact, locally compact group with the weak Haagerup property with constant $1$, then $G$ embeds coarsely into a Hilbert space. If $G$ is moreover second countable, then in particular the Baum-Connes assembly map with coefficients is split-injective.
\end{cor}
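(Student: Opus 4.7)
The plan is to deduce the corollary directly from Theorem~\ref{thm:Schur-CE} by feeding in the sequence of kernels $\hat\phi_n$ arising from the weak Haagerup property with constant $1$. Concretely, assume $(\phi_n)$ is a sequence of continuous functions in $C_0(G)$ converging uniformly to $1$ on compact subsets of $G$, with $\|\hat\phi_n\|_S \leq 1$ for every $n$. I will verify that the associated kernels $\hat\phi_n\colon G\times G\to \C$, defined by $\hat\phi_n(x,y)=\phi_n(x^{-1}y)$, satisfy the three bullet conditions in item~(2) of Theorem~\ref{thm:Schur-CE}.

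The first bullet, $\|\hat\phi_n\|_S\leq 1$, is part of the definition of the weak Haagerup property with constant~$1$. The second bullet -- that each $\hat\phi_n$ tends to zero off tubes -- is precisely the equivalence recorded in the paragraph preceding Theorem~\ref{thm:Schur-CE}: since $\phi_n\in C_0(G)$, the associated kernel $\hat\phi_n$ tends to zero off tubes. For the third bullet, observe that $(x,y)\in \Tube(K)$ means $x^{-1}y\in K$, so the values $\hat\phi_n(x,y)$ taken on $\Tube(K)$ are exactly the values of $\phi_n$ on the compact set $K$. Uniform convergence of $\phi_n$ to $1$ on each compact subset of $G$ therefore translates into uniform convergence of $\hat\phi_n$ to $1$ on every tube.

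With all three conditions verified, Theorem~\ref{thm:Schur-CE} produces a coarse embedding of $G$ into a Hilbert space, proving the first assertion. For the second assertion, assuming $G$ is moreover second countable, I simply invoke \cite[Theorem~3.5]{deprez-li-A-UE}, which asserts that the Baum-Connes assembly map with coefficients is split-injective for any second countable, locally compact group that admits a coarse embedding into a Hilbert space. There is no real obstacle in this argument: the corollary is essentially a translation of the weak Haagerup data into the tube language of Theorem~\ref{thm:Schur-CE}, followed by a citation.
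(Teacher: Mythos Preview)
Your argument is correct and is exactly the approach the paper takes: the corollary is stated as an immediate consequence of Theorem~\ref{thm:Schur-CE} together with \cite[Theorem~3.5]{deprez-li-A-UE}, and your proof simply spells out the (straightforward) verification that the kernels $\hat\phi_n$ coming from the weak Haagerup property with constant~$1$ satisfy the three bullet conditions in part~(2) of Theorem~\ref{thm:Schur-CE}.
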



We now turn to the proof of Theorem~\ref{thm:Schur-CE}. It is not hard to see that the countability assumption in \cite[Proposition~4.3]{MR3146826} is superfluous. We thus record the following (slightly more general) version of \cite[Proposition~4.3]{MR3146826}. 
\begin{lem}\label{lem:split}
Let $G$ be a group with a symmetric kernel $k\colon G\times G\to[0,\infty)$. The following are equivalent.
\begin{enumerate}
	\item For every $t>0$ one has $\|e^{-tk}\|_S \leq 1$.
	\item There exist a real Hilbert space $\mc H$ and maps $R,S\colon G\to\mc H$ such that
$$
k(x,y) = \|R(x)-R(y)\|^2 + \|S(x)+S(y)\|^2 \quad\text{ for every } x,y\in G.
$$
\end{enumerate}
\end{lem}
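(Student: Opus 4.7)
For the implication $(2)\Rightarrow(1)$ I would factor
\[
e^{-tk(x,y)}=e^{-t\|R(x)-R(y)\|^2}\cdot e^{-t\|S(x)+S(y)\|^2}
\]
and show each factor is a Schur multiplier of norm $\leq 1$. Both come from the symmetric Fock space identity $\langle\varphi_t(a),\varphi_t(b)\rangle=e^{-t\|a-b\|^2}$, where $\varphi_t(a)=e^{-t\|a\|^2}\exp(\sqrt{2t}\,a)$ is a unit vector: applied with $(a,b)=(R(x),R(y))$ it factors the first kernel through the unit sphere of a Hilbert space, and applied with $(a,b)=(S(x),-S(y))$ it factors the second kernel the same way. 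Each factor therefore has Schur norm $\leq 1$, and the product of contractive Schur multipliers is again contractive.

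For the harder direction $(1)\Rightarrow(2)$ I would first invoke the standard representation theorem for contractive Schur multipliers: since $e^{-tk}$ is real and $\|e^{-tk}\|_S\leq 1$, for each $t>0$ there exist a real Hilbert space $\mc H_t$ and maps $\xi_t,\eta_t\colon G\to\mc H_t$ with $\sup_x\|\xi_t(x)\|,\sup_y\|\eta_t(y)\|\leq 1$ and $e^{-tk(x,y)}=\langle\xi_t(x),\eta_t(y)\rangle$. Exploiting the symmetry $k(x,y)=k(y,x)$ and setting $u_t=(\xi_t+\eta_t)/2$, $v_t=(\xi_t-\eta_t)/2$, one checks that
\[
e^{-tk(x,y)}=\langle u_t(x),u_t(y)\rangle-\langle v_t(x),v_t(y)\rangle,\qquad \|u_t(x)\|^2+\|v_t(x)\|^2\leq 1.
\]
Enlarging $\mc H_t$ by two one-dimensional summands and adjoining the common scalar $a_t(x)=\sqrt{(1-\|u_t(x)\|^2-\|v_t(x)\|^2)/2}$ in disjoint coordinates of $u_t(x)$ and $v_t(x)$, we may further assume $\|u_t(x)\|^2+\|v_t(x)\|^2=1$ without changing the inner products. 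The polarisation identity then rearranges to
\[
\frac{1-e^{-tk(x,y)}}{t}=\|R_t(x)-R_t(y)\|^2+\|S_t(x)+S_t(y)\|^2
\]
with $R_t=u_t/\sqrt{2t}$ and $S_t=v_t/\sqrt{2t}$.

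To finish I would take an ultralimit as $t\to 0^+$. After translating $R_t$ by $R_t(x_0)$ for a fixed base point $x_0$ (which preserves the displayed identity), the estimates $\|R_t(x)-R_t(x_0)\|^2\leq(1-e^{-tk(x,x_0)})/t$ and $\|S_t(x)\|^2\leq(1-e^{-tk(x,x)})/(4t)$ show that for each fixed $x$ both quantities are bounded in $t$. Picking a sequence $t_n\downarrow 0$ and a free ultrafilter $\omega$ on $\N$, I form the ultraproduct real Hilbert space $\mc H=\prod_\omega(\mc H_{t_n}\oplus\mc H_{t_n})$ and put $R(x)=[R_{t_n}(x)-R_{t_n}(x_0)]_\omega$, $S(x)=[S_{t_n}(x)]_\omega$. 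Passing the displayed identity to the ultralimit then gives $k(x,y)=\|R(x)-R(y)\|^2+\|S(x)+S(y)\|^2$.

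The main obstacle is the symmetrisation/normalisation step in $(1)\Rightarrow(2)$: one must exhibit $e^{-tk}$ as a difference $\langle u_t,u_t\rangle-\langle v_t,v_t\rangle$ with the precise pointwise constraint $\|u_t(x)\|^2+\|v_t(x)\|^2=1$, so that polarisation produces exactly the combination $\|u_t(x)-u_t(y)\|^2+\|v_t(x)+v_t(y)\|^2$ with nonnegative coefficients and the right signs. Here the symmetry of $k$ is essential, and the freedom to enlarge $\mc H_t$ for each $t$ is what makes the normalisation automatic. Once this is in place, the ultralimit is routine and works for arbitrary index sets, which is exactly what removes the countability hypothesis of \cite[Proposition~4.3]{MR3146826}.
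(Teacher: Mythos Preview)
Your argument is correct and is essentially the paper's proof (the one sketched in the commented-out block following \cite[Proposition~4.3]{MR3146826}), only packaged differently: where the paper doubles the index set to $G\sqcup\bar G$, uses the block-matrix form of the contractive Schur multiplier characterization, and takes an ultralimit along finite subsets of $G$, you use the equivalent Grothendieck factorization $e^{-tk(x,y)}=\langle\xi_t(x),\eta_t(y)\rangle$, symmetrize directly via $u_t,v_t$, and take an ultralimit along a sequence $t_n\downarrow 0$. The normalization you perform by appending the scalar $a_t(x)$ in two fresh coordinates is exactly what the paper obtains implicitly by choosing the corner blocks $b_F,c_F$ with unit diagonal; the final parallelogram computation is identical in both versions.
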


Recall that a kernel $k\colon G\times G\to\R$ is \emph{conditionally negative definite} if $k$ is symmetric ($k(x,y) = k(y,x)$), vanishes on the diagonal ($k(x,x) = 0$) and
$$
\sum_{i,j=1}^n c_i c_j k(x_i,x_j) \leq 0
$$
for any finite sequences $x_1,\ldots,x_n\in G$ and $c_1,\ldots,c_n\in\R$ such that $\sum_{i=1}^n c_i = 0$. It is well-known that $k$ is conditionally negative definite if and only if there is a function $u$ from $G$ to a real Hilbert space such that $k(x,y) = \|u(x) - u(y)\|^2$.

A kernel $k\colon G\times G\to\C$ is called \emph{proper}, if the set $\{(x,y)\in G\times G\mid |k(x,y)|\leq R\}$ is a tube for every $R>0$.

Theorem~\ref{thm:Schur-CE} is contained in Theorem~\ref{thm:CE-equivalent} below, which extends both \cite[Theorem~5.3]{MR2945214} and \cite[Theorem~1.5]{deprez-li-A-UE2} in different directions. An important ingredient in the proof of Theorem~\ref{thm:CE-equivalent} is the following result (which generalizes without change from the second countable case to the $\sigma$-compact case).

\begin{thm*}[{\cite[Theorem~3.4]{deprez-li-A-UE}}]
Let $G$ be a $\sigma$-compact, locally compact group. The following are equivalent.
\begin{enumerate}
\item The group $G$ embeds coarsely into a Hilbert space.
\item There is a continuous conditionally negative definite kernel $h\colon G\times G\to \R$ which is proper and bounded on tubes.
\end{enumerate}
\end{thm*}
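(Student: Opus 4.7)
The plan is to prove the two directions separately; $(2)\Rightarrow(1)$ is essentially formal, while $(1)\Rightarrow(2)$ requires a smoothing argument to upgrade a possibly discontinuous coarse embedding to a continuous conditionally negative definite kernel.

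For $(2)\Rightarrow(1)$: applying the standard Schoenberg-type realization to a continuous conditionally negative definite kernel $h$ produces a continuous map $u\colon G\to\mc H$ into a real Hilbert space with $h(x,y)=\|u(x)-u(y)\|^2$. Boundedness of $h$ on each $\Tube(K)$ by some constant $M_K$ yields $\|u(s)-u(t)\|\le\sqrt{M_K}$ there, while the set $\{(s,t):\|u(s)-u(t)\|\le R\}=\{(s,t):h(s,t)\le R^2\}$ is a tube by properness of $h$. Hence $u$ is a continuous coarse embedding.

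For $(1)\Rightarrow(2)$: starting from a (possibly discontinuous) coarse embedding $u\colon G\to\mc H$, the kernel $h_0(x,y)=\|u(x)-u(y)\|^2$ is conditionally negative definite, bounded on every tube (from the small-scale condition), and proper (since $\{h_0\le R\}=\{\|u(x)-u(y)\|\le\sqrt R\}$ lies in a tube by the large-scale condition). The only missing property is continuity. The idea is to smooth $h_0$ by left translation: choose a nonnegative continuous compactly supported $f\colon G\to\R$ with $\int f=1$ and set
\[
h(x,y)=\int_G f(s)\,\|u(sx)-u(sy)\|^2\,\dd s.
\]
For each fixed $s$ the integrand is the squared-distance kernel of the map $x\mapsto u(sx)$ and so is itself conditionally negative definite, whence $h$ is a nonnegative average of such kernels and remains conditionally negative definite. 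Boundedness on tubes transfers because $(sx,sy)\in\Tube(K)$ whenever $(x,y)\in\Tube(K)$, and properness follows from a Markov-type estimate: if $h(x,y)\le R$ then $\|u(sx)-u(sy)\|^2\le 2R$ on a set of positive $f$-measure, giving $x^{-1}y\in K(R)$ for some compact $K(R)$ depending only on $R$.

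The main obstacle will be continuity of $h$. For the integrand to be measurable and for a dominated-convergence argument to apply as $(x,y)$ varies, $u$ must be at least measurable and locally bounded. In the second countable setting treated in \cite{deprez-li-A-UE}, this is arranged by exploiting a proper left-invariant compatible metric (Struble) to replace $u$ by a Borel coarse embedding, after which continuity of $h$ in $(x,y)$ is obtained from the uniform continuity of $f$ on its compact support together with local boundedness of $u$. The $\sigma$-compact extension asserted in the excerpt then reduces to the second countable case via Kakutani--Kodaira: pick a compact normal subgroup $N\trianglelefteq G$ with $G/N$ second countable, and observe that both coarse embeddability and the existence of a continuous proper tube-bounded conditionally negative definite kernel pass between $G$ and $G/N$, since $N$ is coarsely trivial and both tubes and such kernels descend along and pull back through the quotient map $G\to G/N$.
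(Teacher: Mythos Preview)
The paper does not prove this theorem. It is quoted as \cite[Theorem~3.4]{deprez-li-A-UE} and used as a black box; the only comment the paper adds is the parenthetical remark that the result ``generalizes without change from the second countable case to the $\sigma$-compact case.'' The easy direction $(2)\Rightarrow(1)$ is essentially reproved inside the proof of Theorem~\ref{thm:CE-equivalent} (the step $(4)\Rightarrow(1)$), via exactly the GNS argument you give. The substantive direction $(1)\Rightarrow(2)$, producing a \emph{continuous} conditionally negative definite kernel, is not argued in the paper at all, so there is no proof here to compare your smoothing argument against.

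For what it is worth, your outline for $(1)\Rightarrow(2)$ is along the right lines, but one point deserves care. With the left-translation average
\[
h(x,y)=\int_G f(s)\,\|u(sx)-u(sy)\|^2\,\dd s,
\]
continuity in $(x,y)$ does \emph{not} follow merely from ``uniform continuity of $f$ on its compact support together with local boundedness of $u$,'' because after the substitution $t=sx$ the integrand still involves $u(tx^{-1}y)$. What actually makes $h$ continuous is that, once $u$ is Borel and locally bounded, one has $u\in L^2_{\mathrm{loc}}(G;\mc H)$, and the map $g\mapsto u(\,\cdot\,g)$ is continuous into $L^2_{\mathrm{loc}}$ (continuity of translations); this gives $\int f(s)\|u(sx)-u(sx_0)\|^2\,\dd s\to 0$ and hence continuity of $h$. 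Your reduction of the $\sigma$-compact case to the second countable one via Kakutani--Kodaira is a clean way to handle the generality claimed in the paper's remark.
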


\begin{thm}\label{thm:CE-equivalent}
Let $G$ be a $\sigma$-compact, locally compact group. The following are equivalent.
\begin{enumerate}
	\item The group $G$ embeds coarsely into a Hilbert space.
	\item There exists a sequence of (not necessarily continuous) Schur multipliers $\phi_n\colon G\times G\to\C$ such that
	\begin{itemize}
		\item $\|\phi_n\|_S\leq 1$ for every natural number $n$;
		\item each $\phi_n$ tends to zero off tubes;
		\item $\phi_n\to 1$ uniformly on tubes.
	\end{itemize}
	\item There exists a (not necessarily continuous) symmetric kernel $k\colon G\times G\to[0,\infty)$ which is proper, bounded on tubes and satisfies $\|e^{-tk}\|_S\leq 1$ for all $t> 0$.
	\item There exists a (not necessarily continuous) conditionally negative definite kernel $h\colon G\times G\to \R$ which is proper and bounded on tubes.
\end{enumerate}
Moreover, if any of these conditions holds, one can arrange that the coarse embedding in \emph{(1)}, each Schur multiplier $\phi_n$ in \emph{(2)}, the symmetric kernel $k$ in \emph{(3)} and the conditionally negative definite kernel $h$ in \emph{(4)} are continuous.
\end{thm}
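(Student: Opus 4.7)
The strategy is to close the cycle $(1) \Rightarrow (4) \Rightarrow (3) \Rightarrow (2) \Rightarrow (3) \Rightarrow (1)$, with the bulk of the work concentrated in the implication $(2) \Rightarrow (3)$. The implication $(1) \Rightarrow (4)$, together with the continuity of $h$, is given by the cited theorem of Deprez--Li. For $(4) \Rightarrow (3)$ I take $k := h$: a conditionally negative definite kernel is automatically non-negative (test with $c_1=1$, $c_2=-1$) and vanishes on the diagonal, and by Schoenberg's theorem $e^{-th}$ is then positive definite with $1$'s on the diagonal, so $\|e^{-th}\|_S \leq 1$. For $(3) \Rightarrow (2)$ I set $\phi_n := e^{-k/n}$: the Schur bound is immediate, the estimate $|1 - e^{-x}| \leq x$ for $x \geq 0$ combined with boundedness of $k$ on tubes gives $\phi_n \to 1$ uniformly on tubes, and properness of $k$ forces $\phi_n$ to tend to zero off tubes.

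The core of the argument is $(2) \Rightarrow (3)$. I first Hermitise each $\phi_n$ by replacing it with $(\phi_n(x,y) + \overline{\phi_n(y,x)})/2$; this preserves the Schur bound and both limit properties, since the transpose of $\Tube(K)$ is $\Tube(K^{-1})$ and one can symmetrise any tube by passing to $K \cup K^{-1}$. Using $\sigma$-compactness, I fix a compact exhaustion $L_1 \subseteq L_2 \subseteq \cdots$ of $G$. After passing to a subsequence (still denoted $\phi_n$), I arrange that $|1 - \phi_n| \leq 2^{-n}$ on $\Tube(L_n)$ via uniform convergence on this tube, and simultaneously choose nested exhausting compacts $M_n \supseteq L_n$ with $|\phi_n| \leq 2^{-n}$ off $\Tube(M_n)$ via the off-tubes decay applied to the selected $\phi_n$. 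Then define
\[
k(x,y) := \sum_{n=1}^\infty \bigl(1 - \Re \phi_n(x,y)\bigr),
\]
a non-negative symmetric kernel. Any fixed tube is contained in some $\Tube(L_N)$, and splitting the sum at $n=N$ gives a tail $\leq \sum 2^{-n} \leq 1$ and a head $\leq 2N$, so $k$ is bounded on tubes. If $x^{-1}y \notin M_N$, then $(x,y) \notin \Tube(M_n)$ for all $n \leq N$ by nestedness, so $k(x,y) \geq \sum_{n \leq N}(1 - 2^{-n}) \geq N - 1$, and $k$ is proper.

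For the exponential Schur bound, write $e^{-t(1-\Re\phi_n)} = e^{-t}\, e^{t\,\Re\phi_n}$; using submultiplicativity of the Schur norm under pointwise products, the power-series estimate $\|e^{t\,\Re\phi_n}\|_S \leq e^{t\|\Re\phi_n\|_S} \leq e^t$ yields $\|e^{-t(1-\Re\phi_n)}\|_S \leq 1$. Each partial sum $k_N := \sum_{n=1}^N(1-\Re\phi_n)$ therefore satisfies $\|e^{-tk_N}\|_S \leq 1$, and since $k_N \uparrow k$ pointwise the kernels $e^{-tk_N}$ converge pointwise to $e^{-tk}$, so lower semicontinuity of the Schur norm under pointwise convergence yields $\|e^{-tk}\|_S \leq 1$. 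This establishes (3). For $(3) \Rightarrow (1)$ I invoke Lemma~\ref{lem:split} to write $k(x,y) = \|R(x) - R(y)\|^2 + \|S(x) + S(y)\|^2$ for maps $R, S\colon G \to \mc H$. Since the diagonal equals $\Tube(\{e\})$, boundedness of $k$ on tubes gives $4\|S(x)\|^2 = k(x,x) \leq C$, so $S$ is bounded. Then $u := R$ is a coarse embedding: $\|u(x) - u(y)\|^2 \leq k(x,y)$ is bounded on tubes, and $\|u(x) - u(y)\| \leq r$ forces $k(x,y) \leq r^2 + 4\|S\|_\infty^2$, which by properness of $k$ places $(x,y)$ in a tube.

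For the moreover clause, the cited theorem of Deprez--Li provides a continuous $h$ in (4); then $k = h$ in (3) and $\phi_n = e^{-k/n}$ in (2) are continuous, and a continuous cnd $h$ yields a continuous coarse embedding in (1) by the standard Hilbert-space realisation $h(x,y) = \|u(x) - u(y)\|^2$. The main technical obstacle is clearly $(2) \Rightarrow (3)$: one must choreograph a subsequence so that the off-tubes decay and the uniform convergence on tubes cooperate with a single pair of compatible exhausting families $(L_n), (M_n)$, and then transfer the Schur norm bound from partial sums to the infinite series using submultiplicativity and lower semicontinuity of the Schur norm.
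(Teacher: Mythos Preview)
Your proof is correct and follows essentially the same route as the paper: the same cycle of implications, with $(2)\Rightarrow(3)$ built by summing terms of the form $1-\phi_n$ and controlling $\|e^{-tk}\|_S$ via the power-series/submultiplicativity estimate, and with Lemma~\ref{lem:split} used to pass from (3) back toward (1). The only differences are cosmetic: the paper replaces $\phi_n$ by $|\phi_n|^2$ (forcing $0\le\phi_n\le1$) and symmetrises at the end via $k=\psi+\widecheck\psi$, whereas you Hermitise first and take real parts; and the paper inserts growing weights $\alpha_n$ (deducing properness from a single term with $\alpha_n\ge 2R$), whereas you use unit weights and deduce properness from the accumulation of at least $N-1$ from the first $N$ terms outside $\Tube(M_N)$. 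Both bookkeeping choices work equally well.
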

\begin{proof}
We show (1)$\iff$(4)$\iff$(3)$\iff$(2).

That (1) implies (4) with $h$ continuous follows directly from \cite[Theorem~3.4]{deprez-li-A-UE}.

Suppose (4) holds. By the GNS construction
there are a real Hilbert space $\mc H$ and a map $u\colon G\to\mc H$ such that
$$
h(x,y) = \|u(x)-u(y)\|^2.
$$
It is easy to check that the assumptions on $h$ imply that $u$ is a coarse embedding. Thus (1) holds.

That (4) implies (3) follows with $k = h$ using Schoenberg's Theorem and the fact that normalized positive definite kernels are Schur multipliers of norm $1$. Note also that conditionally negative definite kernels are symmetric and take only non-negative values.

Suppose (3) holds. We show that (4) holds. From Lemma~\ref{lem:split} we see that there are a real Hilbert space $\mc H$ and maps $R,S\colon G\to\mc H$ such that
$$
k(x,y) = \|R(x)-R(y)\|^2 + \|S(x)+S(y)\|^2 \quad\text{ for every } x,y\in G.
$$
As $k$ is bounded on tubes, the map $S$ is bounded. If we let
$$
h(x,y) = \|R(x)-R(y)\|^2,
$$
then it is easily checked that $h$ is proper and bounded on tubes, since $k$ has these properties and $S$ is bounded. It is also clear that $h$ is conditionally negative definite. Thus (4) holds.

If (3) holds, we set $\phi_n = e^{-k/n}$ when $n\in\N$. It is easy to check that the sequence $\phi_n$ has the desired properties so that (2) holds.

Finally, suppose (2) holds. We verify (3). Essentially, we use the same standard argument as in the proof of \cite[Proposition~4.4]{K-WH} and \cite[Theorem~2.1.1]{MR1852148}.

Since $G$ is locally compact and $\sigma$-compact, it is the union of an increasing sequence $(U_n)_{n=1}^\infty$ of open sets such that the closure $K_n$ of $U_n$ is compact and contained in $U_{n+1}$ (see \cite[Proposition~4.39]{MR1681462}). Fix an increasing, unbounded sequence $(\alpha_n)$ of positive real numbers and a decreasing sequence $(\epsilon_n)$ tending to zero such that $\sum_n \alpha_n\epsilon_n$ converges. By assumption, for every $n$ we can find a Schur multiplier $\phi_n$ tending to zero off tubes and such that $\|\phi_n\|_S \leq 1$ and
$$
\sup_{(x,y)\in \Tube(K_n)} |\phi_n(x,y) - 1| \leq \epsilon_n/2.
$$
Upon replacing $\phi_n$ by $|\phi_n|^2$ one can arrange that $0 \leq \phi_n \leq 1$ and
$$
\sup_{(x,y)\in \Tube(K_n)} |\phi_n(x,y) - 1| \leq \epsilon_n.
$$
Define kernels $\psi_i:G\times G\to[0,\infty[$ and $\psi:G\times G\to[0,\infty[$ by
$$
\psi_i(x,y) = \sum_{n=1}^i \alpha_n (1-\phi_n(x,y)), \qquad \psi(x,y) = \sum_{n=1}^\infty \alpha_n (1-\phi_n(x,y)).
$$
It is easy to see that $\psi$ is well-defined, bounded on tubes and $\psi_i\to\psi$ pointwise (even uniformly on tubes, but we do not need that).

To see that $\psi$ is proper, let $R > 0$ be given. Choose $n$ large enough such that $\alpha_n \geq 2R$. As $\phi_n$ tends to zero off tubes, there is a compact set $K \subseteq G$ such that $|\phi_n(x,y)| < 1/2$ whenever $(x,y)\notin \Tube(K)$. Now if $\psi(x,y) \leq R$, then $\psi(x,y) \leq \alpha_n / 2$, and in particular $\alpha_n (1-\phi_n(x,y)) \leq \alpha_n/2$, which implies that $1-\phi_n(x,y) \leq 1/2$. We have thus shown that
$$
\{(x,y)\in G\times G \mid \psi(x,y) \leq R \} \subseteq \{(x,y)\in G\times G \mid 1-\phi_n(x,y) \leq 1/2 \}\subseteq \Tube(K),
$$
and $\psi$ is proper.

We now show that $\|e^{-t\psi}\|_S \leq 1$ for every $t>0$. 
Since $\psi_i$ converges pointwise to $\psi$, it will suffice to prove that $\|e^{-t\psi_i}\|_S \leq 1$, because the set of Schur multipliers of norm at most $1$ is closed under pointwise limits. Since
$$
e^{-t\psi_i} = \prod_{n=1}^i e^{-t \alpha_n (1-\phi_n) },
$$
it is enough to show that $e^{-t \alpha_n (1-\phi_n) }$ has Schur norm at most $1$ for each $n$. And this is clear:
$$
\| e^{-t \alpha_n (1-\phi_n) } \|_S = e^{-t \alpha_n} \| e^{t \alpha_n \phi_n } \|_S \leq e^{-t \alpha_n}  e^{t \alpha_n \| \phi_n\|_S } \leq 1.
$$

The only thing missing is that $\psi$ need not be symmetric. Put $k = \psi + \widecheck\psi$ where $\widecheck\psi(x,y) = \psi(y,x)$. Clearly, $k$ is symmetric, bounded on tubes and proper. Finally, for every $t>0$
$$
\|e^{-tk}\|_S \leq \|e^{-t\psi}\|_S \|e^{-t\widecheck\psi}\|_S \leq 1,
$$
since $\|\widecheck\phi\|_S = \|\phi\|_S$ for every Schur multiplier $\phi$.

Finally, the statements about continuity follow from \cite[Theorem~3.4]{deprez-li-A-UE} and the explicit constructions used in our proof of (1)$\implies$(4)$\implies$(3)$\implies$(2).
\end{proof}



\end{document}